\newcommand\Ass{\operatorname{Ass}}
\newcommand\bss{{\boldsymbol s}}
\newcommand\bsx{{\boldsymbol x}}
\newcommand\BZ{{\mathbb Z}}
\newcommand{\cd}[2]{\operatorname{codepth}_{#1}{#2}}
\newcommand{\col}{\colon}
\newcommand\depth{\operatorname{depth}}
\newcommand\Ext{\operatorname{Ext}}
\newcommand\fd{\operatorname{fd}}
\newcommand\fm{{\mathfrak m}}
\newcommand\fn{{\mathfrak n}}
\newcommand\fp{{\mathfrak p}}
\newcommand\fq{{\mathfrak q}}
\newcommand\Hom{\operatorname{Hom}}
\newcommand{\HH}[2]{\operatorname{H}_{#1}(#2)}
\newcommand\Ker{\operatorname{Ker}}
\newcommand\length{\operatorname{length}}
\newcommand{\lra}{\longrightarrow}
\newcommand\Spec{\operatorname{Spec}}
\newcommand\Supp{\operatorname{Supp}}
\newcommand{\tensor}[3][R]{{#3}^{\hskip-.5pt\otimes_{\hskip-1.4pt #1}^{\hskip-1pt #2}}}
\newcommand{\tf}[2]{{\boldsymbol\bot}_{#1}{#2}}
\newcommand\tor[4]{\operatorname{Tor}^{#2}_{#1}(#3,#4)}
\newcommand\Tor{\operatorname{Tor}}
\newcommand{\xla}{\xleftarrow}
\newcommand{\xra}{\xrightarrow}
\newcommand{\zdr}[2]{{\boldsymbol\top}_{\hskip-2pt #1}{#2}}
\newtheorem{theorem}{Theorem}[section]
\newtheorem*{Theorem}{Main Theorem}
\newtheorem{proposition}[theorem]{Proposition}
\newtheorem{lemma}[theorem]{Lemma}
\theoremstyle{definition}
\newtheorem{example}[theorem]{Example}
\newtheorem{remark}[theorem]{Remark}
\theoremstyle{remark}
\newtheorem*{Notes}{Notes}
\newtheorem*{Remark}{Remark}
\newtheorem{chunk}[theorem]{}
\newenvironment{bfchunk}{\begin{chunk}\textbf}{\end{chunk}}
\begin{document}

\title[Detecting flatness]{Detecting flatness over smooth bases}

\author[L.~L.~Avramov]{Luchezar L.~Avramov}
\address{Department of Mathematics,
University of Nebraska, Lincoln, NE 68588, U.S.A.}
\email{avramov@math.unl.edu}

\author[S.~B.~Iyengar]{Srikanth B.~Iyengar}
\address{Department of Mathematics,
University of Nebraska, Lincoln, NE 68588, U.S.A.}
\email{iyengar@math.unl.edu}
\thanks{Research partly supported by NSF grants 
DMS-0803082 (LLA) and DMS-0903493 (SBI)}

\date{20th January 2011}

\begin{abstract}
Given an essentially finite type morphism of schemes $f\colon X\to
Y$ and a positive integer $d$, let $f^{\{d\}}\colon X^{\{d\}}\to Y$
denote the natural map from the $d$-fold fiber product $X^{\{d\}}=
X\times_{Y}\cdots\times_{Y}X$ and $\pi_i\colon X^{\{d\}}\to X$ the $i$th
canonical projection.  When $Y$ smooth over a field and $\mathcal F$
is a coherent sheaf on $X$, it is proved that $\mathcal F$ is flat
over $Y$ if (and only if) $f^{\{d\}}$ maps the associated points of
${\bigotimes_{i=1}^d}\pi_i^*{\mathcal F}$ to generic points of $Y$, for
some $d\ge\dim Y$.  The equivalent statement in commutative algebra is an
analog---but not a consequence---of a classical criterion of Auslander
and Lichtenbaum for the freeness of finitely generated modules over
regular local rings.
\end{abstract}

\maketitle

\section*{Introduction}

Knowledge that a scheme can be fibered as a flat family over some regular
base scheme represents fundamental structural information.  For this
reason---among others---it is desirable to have efficient methods for
deciding the flatness of a morphism of \emph{noetherian} schemes $f\col
X\to Y$ with $Y$ \emph{regular}.  

One necessary condition is that $f$ maps associated points of $X$ to 
generic points of $Y$.  When $\dim Y = 1$, this condition is also 
sufficient; see~\cite[III.9.7]{Ha}.

When the morphism $f$ is \emph{finite}, a criterion for freeness of finite modules
over regular local rings, due to Auslander \cite{Au} and Lichtenbaum
\cite{Li}, translates into a similar criterion involving $d$-fold fiber products
$X^{\{d\}}=X\times_{Y}\cdots\times_{Y}X$: If $d\geq \dim Y$ and 
the natural morphism $f^{\{d\}}\col X^{\{d\}}\to Y$ sends associated 
points of $X^{\{d\}}$ to generic points of $Y$, then $f$ is flat.
This was observed by Vasconcelos~\cite{Va}, who proved that the 
conclusion holds for all morphisms $f$ when $d=2=\dim Y$, and 
conjectured that it holds in all dimensions when $f$ is a morphism 
essentially of finite type.  

We prove a criterion for flatness relative to $f$ of coherent sheaves
$\mathcal F$ on $X$, using the canonical projections $\pi_i\col
X^{\{d\}}\to X$:  \emph{Assume that $Y$ is essentially smooth over
some field and $f$ is essentially of finite type.  If $f^{\{d\}}$ maps
the associated points of ${\bigotimes_{i=1}^d}\pi_i^*{\mathcal F}$ to
generic points of\,\ $Y$ for some $d\ge\dim Y$, then $\mathcal{F}$ is
flat over~$Y$.}  Thus, flatness is detected by the values of $f^{\{d\}}$
at finitely many points.  Setting $\mathcal{F}=\mathcal{O}_Y$ one obtains
a proof of Vasconcelos' conjecture in a case of prime interest in geometry.

The theorem above is an analog of Auslander's criterion.  It is equivalent
to the following statement in commutative algebra, proved in Section \ref{sec:flatness}.

\begin{Theorem}
Let $K$ be a field, $R$ an essentially smooth $K$-algebra, $A$ an algebra 
essentially of finite type over $R$, and $M$ a finite $A$-module.

If $\tensor dM$ is torsion-free over $R$ for some $d\geq \dim R$, then $M$ is flat over $R$.
 \end{Theorem}

For $K=\mathbb C$, the preceding result is proved by Adamus, Bierstone,
and Milman in \cite{ABM}, subsequent to the special case $M=A$ with $A$
equidimensional and of finite type over $\mathbb C$, settled by Galligo
and Kwieci\'nski~\cite{GK}.  In those papers the algebraic statements
are deduced from analogous theorems about morphisms of complex-analytic
spaces, proved by using transcendental-geometric tools.

Our proof is algebraic, and its architecture reflects Auslander's design.  The 
argument proceeds in four steps, summed up in the opening statements 
of the first four sections.  Sections \ref{sec:rigidity} and \ref{sec:flatness}  
are mostly homological in nature.  They deal with vanishing of $\Tor$ modules---and 
thus, ultimately, with flatness---and allow for an adaptation of methods used by 
Auslander in~\cite{Au}.  

In Sections \ref{sec:torsion} and \ref{sec:torsion2} we relate vanishing of 
$\operatorname{Tor}$ modules to torsion-freeness.  These sections form
the core of the paper.  The techniques applied in them are, by necessity,
different from those that work for finite modules over local rings.
More detailed comparisons are given in notes immediately following 
the first theorem in each section.  In these notes we also explain why in the Main 
Theorem the base ring $R$ is assumed to be an essentially smooth algebra 
over a field, rather than just a regular ring.

In  Section~\ref{sec:two} we focus on rings of dimension two. Assuming only that 
$R$ is regular, we establish a criterion for flatness that covers a larger class of 
modules than those in the Main Theorem, and includes Vasconcelos' result.

\section{Rigidity}
  \label{sec:rigidity}

In this paper rings are assumed commutative.

An algebra $B$ over a ring $K$ is said to be \emph{essentially of finite
type} if it is a localization of some finitely generated $K$-algebra.
In case $K$ is a field, $B$ is \emph{essentially smooth over $K$} if,
in addition, the ring $B\otimes_KK'$ is regular for every field extension
$K\subseteq K'$; thus, when the field $K$ is perfect, the $K$-algebra 
$B$ is essentially smooth if and only if $B$ is a regular ring.

For the rest of this section $R$ denotes a noetherian ring.  

For every prime ideal $\fp$ of $R$ we set $k(\fp)=R_\fp/\fp R_\fp$.

We say that an $R$-module $M$ is \emph{essentially of finite type} if there 
exists an $R$-algebra $A$ essentially of finite type with the property that 
$M$ is a finite $A$-module, and the $R$-module structure induced through
$A$ coincides with the original one. Any such algebra $A$ will be called a 
\emph{witness} for $M$.

It is clear that finite $R$-modules are essentially of finite type, and that the latter
class is much larger than the former. Remark~\ref{ex:noneft} describes an 
interesting family of modules that are \emph{not} essentially of finite type.

\begin{theorem}
\label{thm:rigidity}
Let $R$ be an essentially smooth algebra over a field, and let $M$ and 
$N$ be $R$-modules essentially of finite type.

If $\tor iRMN=0$ for some $i\geq 0$, then $\tor jRMN=0$ for each $j\geq i$. 
\end{theorem}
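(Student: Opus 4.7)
The plan is to emulate Auslander's argument for rigidity of $\Tor$ over unramified regular local rings, taking care to carry the witness algebras along as needed.

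First I would make formal reductions. Since $\tor jRMN$ carries a natural module structure over $A\otimes_R B$ (with $A$ and $B$ witnesses for $M$ and $N$), it is itself essentially of finite type over $R$; thus its vanishing may be tested prime by prime. Localizing at an arbitrary prime of $R$, we may assume that $R$ is a regular local ring, still essentially smooth over a field, and that $M,N$ are essentially of finite type. It also suffices to prove the one-step implication $\tor iRMN=0\Rightarrow \tor{i+1}RMN=0$.

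With $R$ regular local, the projective dimensions of $M$ and $N$ over $R$ are finite, bounded by $\dim R$. Following Auslander, the strategy is to identify the last nonvanishing $\Tor$ and derive a contradiction from its existence once an earlier $\Tor$ is assumed to vanish. This rests on a depth formula for the last nonvanishing $\Tor$, together with long exact sequences arising from syzygies of $M$ to propagate the information.

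The principal obstacle is that Auslander's depth computations invoke the Auslander--Buchsbaum formula for $M$ and depth bounds for $N$ over $R$ itself, both of which presuppose finite generation over $R$. In our setting $M$ is only finite over a witness algebra $A$, so the central technical step is to transplant the relevant depth tools to the essentially-of-finite-type setting: compute depths at maximal ideals of $A$ lying over the maximal ideal of $R$ and then transfer the resulting inequalities back to $R$. The hypothesis that $R$ is essentially smooth over a field (rather than merely regular) should be used at this point to keep the witness algebras and their residue rings sufficiently well behaved, and to ensure that Auslander's classical arguments are applicable after passing to the witness level. Once this extended depth apparatus is in place, Auslander's induction should carry over with essentially formal modifications.
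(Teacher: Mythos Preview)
There is a genuine gap in your plan, and it sits exactly where you flag the ``principal obstacle.'' You propose to rescue Auslander--Buchsbaum and the associated depth inequalities for modules not finite over $R$ by computing depths over the witness algebras and transferring back, but you give no concrete mechanism for doing so. Auslander's argument hinges on precise relations among $\depth_RM$, $\depth_RN$, and $\depth_R\tor qRMN$; when $M$ is finite over $A$ and $N$ over a \emph{different} algebra $B$, depths at maximal ideals of $A$ or $B$ do not obviously control these quantities, and $\tor qRMN$ is finite only over $A\otimes_RB$, a third ring altogether. Your remark that smoothness of $R$ over $K$ should keep the witnesses ``sufficiently well behaved'' points to nothing specific---$A$ and $B$ are arbitrary essentially-finite-type $R$-algebras with no regularity of their own. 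Without a genuine substitute for Auslander--Buchsbaum in this generality, the induction you envisage cannot start. (It may also help to note that what you describe is closer to Lichtenbaum's mixed-characteristic argument; Auslander's own equicharacteristic proof already proceeds via Koszul complexes.)

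The paper bypasses all of this with a different idea: the Cartan--Eilenberg isomorphism $\tor jRMN\cong\tor jQR{M\otimes_KN}$, where $Q=R\otimes_KR$ and $R$ is a $Q$-module via multiplication. Essential smoothness is used precisely here, to show that $Q$ is regular. Localizing at any prime $\fq$ of $Q$ containing $\Ker(Q\to R)$, the induced surjection $Q_\fq\to R_\fq$ of regular local rings has kernel generated by a regular sequence $\bsx$, whence $\tor jRMN_\fq\cong\HH j{\bsx;(M\otimes_KN)_\fq}$. Now $(M\otimes_KN)_\fq$ is a \emph{single} module, finite over the noetherian local ring $(A\otimes_KB)_\fq$, and classical Koszul rigidity for finite modules finishes immediately. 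The change of rings collapses the two-module problem into a one-module Koszul-homology problem in which finiteness over a witness is all that is required; no depth bookkeeping for $M$ and $N$ separately ever enters.
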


  \begin{Notes}
When $R$ is a regular local ring and $M$ and $N$ are finite $R$-modules,
the conclusion above was proved by Auslander \cite[2.1]{Au} in case $R$ 
is an algebra over some field, by using Koszul complexes, and 
extended to the general case by Lichtenbaum \cite[Cor.\,1]{Li}, by 
applying different techniques.

Our proof of Theorem \ref{thm:rigidity} also relies on Koszul complexes.
  \end{Notes}

Given a finite sequence $\bsx$ of elements of $R$ and an $R$-module 
$M$, set
\[
\HH i{\bsx; M}=\HH i{R\langle\bsx\rangle\otimes_{R}M}\,,
\]
where $R\langle\bsx\rangle$ is the Koszul complex on $\bsx$.  We recall
an important fact:

\begin{bfchunk}{Koszul rigidity.~I.}
\label{koszul:rigidity}
If $M$ is an $R$-module essentially of finite type, and $\HH i{\bsx; M}=0$ for some integer $i\ge0$, then $\HH j{\bsx; M}=0$ holds for $j\geq i$.

Indeed, let $A$ be a witness for $M$ and $\alpha\col R\to A$ the structure map. There is an isomorphism of complexes
$R\langle\bsx\rangle\otimes_{R}M\cong A\langle\alpha(\bsx)\rangle\otimes_{A}M$, and since $M$ is finite over
$A$, the result follows from the classical case; see \cite[2.6]{AB}.
  \end{bfchunk}

\begin{lemma}
\label{lem:smooth}
Let $K$ be a field and $R$ an essentially smooth $K$-algebra. 

Set $Q=R\otimes_KR$, let $\mu\col Q\to R$ be the surjective homomorphism 
of rings, given by $\mu(r\otimes r')=rr'$, and set $I=\Ker(\mu)$.

For every prime $\fq\in\Spec Q$ with $\fq\supseteq I$, any minimal generating 
set $\bsx$ for $I_\fq$, and each $j\in\BZ$ there is an isomorphism of $R_\fq$-modules
  \begin{equation*}
  \tor jRMN_{\fq} 
  \cong \HH j{\bsx; (M\otimes_KN)_{\fq}}\,.
  \end{equation*}
   \end{lemma}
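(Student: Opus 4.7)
The plan is to deduce from essential smoothness of $R/K$ that $\mu\col Q\to R$ is a local complete intersection along $V(I)$, so that the Koszul complex on a minimal generating set $\bsx$ of $I_\fq$ is a free $Q_\fq$-resolution of $R_\fp$ (where $\fp=\mu(\fq)$), and then to pair this with a standard change-of-rings identification for $\Tor$.

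For the complete-intersection input I would invoke the canonical identification $I/I^2\cong\Omega_{R/K}$ of the conormal module with the module of K\"ahler differentials. Essential smoothness makes $\Omega_{R/K}$ finitely generated and projective over $R$, so $(I/I^2)_\fp$ is free of some finite rank $n$. Nakayama's lemma in the noetherian local ring $Q_\fq$ then forces every minimal generating set of $I_\fq$ to have exactly $n$ elements. Since $I_\fq$ can be generated by a $Q_\fq$-regular sequence of length $n$ (lifting any basis of $(I/I^2)_\fp$), the standard fact that any minimal generating set of a complete-intersection ideal in a noetherian local ring is itself a regular sequence shows that $\bsx$ is $Q_\fq$-regular. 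Hence $Q_\fq\langle\bsx\rangle$ is a free $Q_\fq$-resolution of $R_\fp$.

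For the change-of-rings identification $\tor *RMN\cong\Tor^Q_*(R,M\otimes_KN)$, I would take flat $R$-resolutions $F_\bullet\to M$ and $G_\bullet\to N$, observe that each $F_i\otimes_K G_j$ is $Q$-flat (immediate for free modules, extended by filtered colimits via Lazard), and note that the total complex of $F_\bullet\otimes_K G_\bullet$ is a flat $Q$-resolution of $M\otimes_KN$ because $K$ is a field makes $-\otimes_K-$ exact in each variable. Applying $R\otimes_Q -$ to $F_i\otimes_K G_j$ returns $F_i\otimes_R G_j$, whose total complex computes $\tor *RMN$ by the usual double-complex spectral sequence. Now localize at $\fq$: since the $Q$-action on any $R$-module factors through $\mu$, localization at $\fq$ coincides with localization at $\fp$, and the Koszul resolution from the first step identifies $\Tor^{Q_\fq}_j(R_\fp,(M\otimes_K N)_\fq)$ with $\HH j{\bsx;(M\otimes_K N)_\fq}$, yielding the claimed formula.

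The main obstacle I anticipate is the first step: cleanly extracting from essential smoothness the fact that any minimal generating set of $I_\fq$ is a regular sequence in $Q_\fq$. Once that is in hand, the remaining change-of-rings identification is routine homological bookkeeping.
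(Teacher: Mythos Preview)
Your overall architecture matches the paper's: reduce $\Tor^R_*(M,N)$ to $\Tor^Q_*(R, M\otimes_K N)$ via the Cartan--Eilenberg change of rings, then show that $I_\fq$ is generated by a $Q_\fq$-regular sequence so that the Koszul complex on $\bsx$ resolves $R_\fp$. Your treatment of the change-of-rings step is correct and is exactly what the paper cites as \cite[IX.4.4]{CE}. The first step, which you rightly flag as the main obstacle, is where your argument has a gap.

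You argue that $(I/I^2)_\fp\cong(\Omega_{R/K})_\fp$ is free, hence a minimal generating set of $I_\fq$ has the expected cardinality $n$, and then assert that ``$I_\fq$ can be generated by a $Q_\fq$-regular sequence of length $n$ (lifting any basis of $(I/I^2)_\fp$)''. But freeness of the conormal module alone does not force a lifted basis to be regular: take $I=(x,y)$ in $Q=k[x,y]/(x,y)^2$, where $I/I^2=I$ is free of rank $2$ over $Q/I=k$ yet $x,y$ is not a regular sequence. To make your route work you would need an extra ingredient, e.g.\ that $I_\fq$ has finite projective dimension over $Q_\fq$ (Vasconcelos' criterion), or that $\mu$ is a section of the smooth map $R\to Q$, $r\mapsto r\otimes1$, and sections of smooth maps are regular immersions.

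The paper closes this gap by a different, shorter argument that bypasses $\Omega_{R/K}$ entirely. The map $\iota\col R\to Q$, $r\mapsto r\otimes1$, is flat with fiber $k(\fp)\otimes_K R$ over each $\fp\in\Spec R$; this fiber is regular \emph{by the definition} of essential smoothness. Since $R$ is regular, a standard ascent result (\cite[23.7]{Ma}) makes $Q$ regular. Then $Q_\fq\to R_\fp$ is a surjection of regular local rings, so its kernel $I_\fq$ is generated by a regular sequence (\cite[14.2]{Ma}), and any minimal generating set $\bsx$ is therefore regular. This is precisely the point at which ``essentially smooth over a field'' (and not merely ``$R$ regular'') is used.
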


  \begin{proof}
The first isomorphism is a localization of a formula from \cite[IX.4.4]{CE}:
  \[
\tor jRMN_{\fq} \cong \tor j{{Q}}R{M\otimes_{K}N}_{\fq}
\cong \tor j{Q_{\fq}}{R_\fq}{(M\otimes_KN)_{\fq}} \,.
  \]

For each prime ideal $\fp$ of $R$, set $k(\fp)=R_\fp/\fp R_\fp$.  The map 
$r\mapsto r\otimes1$ gives a flat homomorphism of rings 
$\iota\col R\to Q$ with fibers $k(\fp)\otimes_KR$.  Since 
$R$ is essentially smooth over $K$, both rings $R$ and $k(\fp)\otimes_KR$ are 
regular.  By \cite[23.7]{Ma}, the ring $Q$ is regular as well.
The induced map $Q_\fq\to R_\fq$ is a surjective homomorphism 
of regular local rings, so its kernel $I_\fq$ is generated by a regular 
sequence; see~\cite[14.2]{Ma}.  By~\cite[16.5]{Ma}, the Koszul complex
$Q_\fq\langle\bsx\rangle$ is a free resolution of $R_\fq$ over 
$Q_\fq$; this gives an isomorphism
\[
\tor j{Q_{\fq}}{R_\fq}{(M\otimes_KN)_{\fq}}\cong \HH j{\bsx; (M\otimes_KN)_{\fq}}\,.
\]
Concatenating the displayed isomorphisms completes the proof.
  \end{proof}

  \begin{proof}[Proof of Theorem~\emph{\ref{thm:rigidity}}]
By localization and Lemma \ref{lem:smooth}, it suffices to show
that $\HH i{\bsx; (M\otimes_KN)_{\fq}}=0$ implies $\HH j{\bsx;
(M\otimes_KN)_{\fq}}=0$ for $j\ge i$ and for each $\fq$ in $\Spec Q$ with
$\fq\supseteq I$.  This follows from \ref{koszul:rigidity}, for the module
$(M\otimes_{K}N)_{\fq}$ is essentially of finite type over $Q_{\fq}$, as 
witnessed by $(A\otimes_{K}B)_{\fq}$, where $A$ is a witness for
$M$ and $B$ is one for $N$.
  \end{proof}

  \begin{Remark}
    \label{not:eft}
The papers \cite{ABM,GK} deal with \emph{almost 
finite} modules over analytic algebras.  This class is distinct from 
that of modules essentially of finite type.
  \end{Remark}

\section{Torsion in tensor products}
  \label{sec:torsion}

Let $R$ be a ring and $U$ its multiplicatively closed subset consisting 
of all the non-zero-divisors.  The \emph{torsion submodule}, $\zdr RM$, 
of an $R$-module $M$ is the kernel of the localization map $M\to U^{-1}M$.  
There is an exact sequence 
\begin{equation}
\label{eq:zdr}
   \tag{2.0}
0\lra \zdr RM \lra M\lra \tf RM\lra 0
\end{equation}
of $R$-modules.  The module $M$ is said to be \emph{torsion} when 
$\zdr RM=M$; it is called \emph{torsion-free} when $\zdr RM=0$.  Note 
that $\zdr RM$ is a torsion module, while $\tf RM$ is a torsion-free one.

\begin{theorem}
\label{thm:torsion}
Let $R$ be an essentially smooth algebra over a field, and let 
$M$ and $N$ be $R$-modules essentially of finite type.

If the $R$-module $M\otimes_{R}N$ is torsion-free, then the following 
statements hold:
\begin{enumerate}[\quad\rm(1)]
\item $\tor iRMN=0$ for each $i\geq 1$.
\item $\tor iRM{\zdr RN}= 0 = \tor iR{\zdr RM}N$ for all $i\geq 0$.
\end{enumerate}
\end{theorem}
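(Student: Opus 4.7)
My plan is to prove part~(1) first—reducing it via Lemma~\ref{lem:smooth} and Theorem~\ref{thm:rigidity} to a Koszul-homology vanishing—and then to deduce part~(2) from~(1) by a long-exact-sequence argument.

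For part~(1), the rigidity Theorem~\ref{thm:rigidity} will reduce the claim to showing $\Tor_1^R(M,N) = 0$, and this vanishing will be tested locally. Fix $\fp \in \Spec R$ and consider the diagonal prime $\fq = \mu^{-1}(\fp) \in \Spec Q$. Lemma~\ref{lem:smooth} will give
\[
\Tor_j^R(M,N)_\fp \cong \HH j{\bsx; (M \otimes_K N)_\fq},
\]
where $\bsx$ is a minimal generating $Q_\fq$-regular sequence for $I_\fq$. Writing $S = Q_\fq$ and $P = (M \otimes_K N)_\fq$, the degree-zero piece recovers $(M \otimes_R N)_\fp$, which is torsion-free over the regular local domain $R_\fp$: indeed, each associated prime of $M \otimes_R N$ is a minimal prime of the regular ring $R$, and each prime of $R$ dominates a unique minimal prime. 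The task thereby becomes the Koszul-vanishing statement $\HH 1{\bsx; P} = 0$.

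This Koszul step is the main obstacle. The formal hypothesis alone is insufficient—for instance, $S = K[x]_{(x)}$, $I = (x)$, $P = S/(x^{2})$ has $P/IP = k$ torsion-free yet $\HH 1{x; P} \neq 0$—so the argument must exploit both the product structure $P = (M \otimes_K N)_\fq$ and the essential smoothness of $R/K$. My strategy will be to analyze $\Ass_S P$ through the two natural $R$-module structures on $P$ induced by $r \mapsto r \otimes 1$ and $r \mapsto 1 \otimes r$; after a preliminary reduction to the case where $M$ and $N$ are already torsion-free over $R$, every NZD of $R$ will act injectively on $P$ through either structure. Combined with torsion-freeness of $P/(\bsx)P$, this should force $\bsx$ to be $P$-regular, giving $\HH j{\bsx; P} = 0$ for every $j \geq 1$ and completing~(1) via Koszul rigidity~\ref{koszul:rigidity}.

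Granting~(1), I derive~(2) from the short exact sequence $0 \to \zdr RN \to N \to \tf RN \to 0$, whose outer terms are essentially of finite type because any Noetherian witness for $N$ also witnesses its sub- and quotient modules. Tensoring with $M$ and using that every torsion submodule of the torsion-free module $M \otimes_R N$ vanishes, the image of $M \otimes_R \zdr RN$ in $M \otimes_R N$ is zero; hence $M \otimes_R N \to M \otimes_R \tf RN$ is an isomorphism, so $M \otimes_R \tf RN$ is torsion-free as well. Applying~(1) separately to the pairs $(M, N)$ and $(M, \tf RN)$ gives $\Tor_i^R(M, N) = 0 = \Tor_i^R(M, \tf RN)$ for every $i \geq 1$, and chasing the long exact sequence of $\Tor$ then forces $\Tor_i^R(M, \zdr RN) = 0$ for all $i \geq 0$. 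The assertion for $\zdr RM$ is symmetric.
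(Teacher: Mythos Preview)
Your treatment of part~(2) is correct and is the same as the paper's. The problem lies in part~(1), at precisely the step you flag as ``the main obstacle.'' The preliminary reduction to torsion-free $M$ and $N$ can in fact be made to work (combine Lemma~\ref{lem:zdr} with rigidity, Theorem~\ref{thm:rigidity}), so that is not the issue. The gap is the Koszul-vanishing argument itself. After the reduction you know that every nonzero $u\in R_\fp$ acts injectively on $P=(M\otimes_KN)_\fq$ through each of the maps $u\mapsto u\otimes1$ and $u\mapsto1\otimes u$. But the sequence $\bsx$ generates the diagonal ideal $I_\fq$, whose elements are built from differences $r\otimes1-1\otimes r$; injectivity of the two summands separately says nothing about their difference. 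In terms of associated primes your information says only that every $\fq'\in\Ass_SP$ contracts to $(0)$ under both maps $R\to Q$, and already for $R=K[t]$ the diagonal prime $(t\otimes1-1\otimes t)$ has that property without being zero. So ``torsion-freeness of $P/(\bsx)P$ plus injectivity through both $R$-structures'' does not force $\bsx$ to be $P$-regular, and the ``should'' in ``this should force'' hides a genuine missing argument.

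The paper closes this gap with two ingredients your outline lacks. First, instead of localizing at a prime of $Q$ pulled back from $\Spec R$, one assumes $\Tor_1^R(M,N)\ne0$ and picks a prime $\fm$ of the \emph{witness algebra} $C=A\otimes_KB$ at which $\Tor_1^R(M,N)_\fm$ is nonzero and of \emph{finite length} over $C_\fm$. Second, one invokes a result of Huneke and Wiegand (stated as~\ref{ch:koszul2}): for a finite module $L$ over a local ring, if $\HH1{\bsx;L}$ has nonzero finite length then $\depth\HH0{\bsx;L}=0$. Applying this to $L=(M\otimes_KN)_\fm$ via Lemma~\ref{lem:smooth} gives $\depth_{C_\fm}(M\otimes_RN)_\fm=0$; tracking depth along $Q_\fq\to C_\fm$ and $Q_\fq\to R_\fp$ (Lemma~\ref{lem:depth-change}) then contradicts the torsion-freeness of $M\otimes_RN$ via Lemma~\ref{lem:localization}. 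The passage to a finite-length locus over $C$ and the Huneke--Wiegand input are what your proposal is missing.
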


  \begin{Notes}
When $R$ is a regular local ring, $M$ and $N$ are finite $R$-modules, and 
$M\otimes_RN$ is torsion-free, the statements above hold by \cite[3.1(b)]{Au} 
(if $R$ is  unramified) and \cite[Cor.\,2(b)]{Li} (in general).  
The finiteness hypothesis is critical for the proofs of these results.

Our proof draws on different ideas.  The hypothesis that $R$ is an essentially 
smooth algebra is used at a crucial juncture of the argument, in order to 
replace certain $\Tor$ modules with appropriate Koszul homology modules.

The hypotheses of the theorem do not, in general, imply that $M$ or $N$ is
a torsion-free $R$-module; see Example \ref{ex:tensor1}.
  \end{Notes}

We start preparations for proving Theorem~\ref{thm:torsion} with a 
standard calculation.

\begin{lemma}
\label{lem:zdr}
Let $R$ be a ring and let $M$ and $N$ be $R$-modules. 

If $M\otimes_{R}N$ is torsion-free, then there are natural isomorphisms
\[
(\tf R{M})\otimes_{R} N \xla{\ \cong\ }  M\otimes_{R}N\xra{\ \cong\ } 
M\otimes_{R}(\tf R{N}) \xra{\ \cong\ } (\tf R{M})\otimes_{R}(\tf R{N})\,.
\]

If, in addition, $\tor 1R{\tf RM}N=0$, then $(\zdr R{M})\otimes_{R}N=0$.
\end{lemma}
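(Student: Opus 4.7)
The plan is to tensor the defining exact sequence \eqref{eq:zdr} (and its analogue with the roles of $M$ and $N$ exchanged) with the appropriate module, and then exploit the principle that a torsion module cannot map nontrivially into a torsion-free one.  The key preliminary observation I will need is that $(\zdr RM)\otimes_RN$ is a \emph{torsion} $R$-module: each simple tensor $x\otimes n$ is annihilated by any $u\in U$ that kills $x$, and a finite sum of such tensors is annihilated by the product of the corresponding annihilators, which remains in $U$ because $U$ is multiplicatively closed.  This is the only point of any real subtlety; the rest of the argument is essentially formal diagram chasing.

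For the first assertion, tensoring \eqref{eq:zdr} with $N$ on the right yields the right-exact sequence
\[
(\zdr RM)\otimes_RN\lra M\otimes_RN\lra (\tf RM)\otimes_RN\lra 0,
\]
and the preliminary observation forces the left map to vanish, giving the isomorphism $M\otimes_RN\cong (\tf RM)\otimes_RN$.  The symmetric computation with $M$ and $N$ interchanged produces $M\otimes_RN\cong M\otimes_R(\tf RN)$.  Since $(\tf RM)\otimes_RN$ is now known to be torsion-free, applying this second isomorphism with $M$ replaced by $\tf RM$ supplies the last piece, $(\tf RM)\otimes_RN\cong (\tf RM)\otimes_R(\tf RN)$.

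For the second assertion, I will extend the tensored sequence one term to the left using the long exact $\Tor$ sequence attached to \eqref{eq:zdr}:
\[
\tor 1R{\tf RM}N\xra{\ \delta\ }(\zdr RM)\otimes_RN\lra M\otimes_RN.
\]
Part~(1) just established that the right-hand map is injective, so $\delta$ is surjective.  The additional hypothesis $\tor 1R{\tf RM}N=0$ then forces $(\zdr RM)\otimes_RN=0$, as required.
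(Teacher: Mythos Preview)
Your approach is essentially the one in the paper: tensor the exact sequence \eqref{eq:zdr} with $N$, use that a torsion module maps trivially into a torsion-free one to conclude that $\tau$ is bijective, then iterate by symmetry.  There is, however, a slip in your argument for the second assertion.  Part~(1) did \emph{not} establish that the map $(\zdr RM)\otimes_RN\to M\otimes_RN$ is injective; it established that this map is \emph{zero} (equivalently, that $\tau$ is bijective).  It is because the map is zero that exactness at $(\zdr RM)\otimes_RN$ forces $\delta$ to be surjective; had the map been merely injective, exactness would yield $\operatorname{Im}\delta=0$, not $\operatorname{Im}\delta=(\zdr RM)\otimes_RN$.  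With ``injective'' replaced by ``zero'' your proof is correct and coincides with the paper's.
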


\begin{proof}
Tensor the sequence \eqref{eq:zdr} with $N$ to get an exact sequence
\[
\tor 1R{\tf RM}N\lra (\zdr R{M})\otimes_{R}N\lra M\otimes_{R}N\xra{\ \tau\ }
(\tf R{M})\otimes_{R}N\lra 0
\]
As $(\zdr R{M})\otimes_{R}N$ is torsion and $M\otimes_{R}N$ is torsion-free, 
this sequence shows that $\tau$ is bijective.  By symmetry, so is 
$M\otimes_{R}N\to M\otimes_{R}(\tf R{N})$. Thus, $M\otimes_{R}(\tf R{N})$ 
is torsion-free, so the preceding argument shows that the homomorphism of $R$-modules 
$M\otimes_{R}(\tf R{N})\to(\tf R{M})\otimes_{R}(\tf R{N})$ is bijective.

The final assertion of the lemma is clear from the exact sequence above.
\end{proof}

\begin{lemma}
\label{lem:eft-tors}
If $M$ and $N$ are $R$-modules essentially of finite type, with witnesses 
$A$ and $B$, respectively, then for each $i\in\BZ$ the $R$-module 
$\tor iRMN$ is essentially of finite type, with witness $A\otimes_{R}B$.
\end{lemma}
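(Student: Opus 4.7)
My plan is to exhibit $\Tor^R_i(M,N)$ as the homology of a complex whose terms are finite modules over $A\otimes_RB$; since the latter ring is essentially of finite type over $R$ and hence noetherian, the conclusion will follow. The case $i<0$ is vacuous, so I focus on $i\geq 0$.

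The first step is to present $A$ as a quotient $T/J$, where $T$ is a localization of a polynomial ring $R[\bsx]$ in a finite tuple of indeterminates; this is possible because $A$ is essentially of finite type over $R$. The crucial features of $T$ are that it is both $R$-flat (as a localization of a free $R$-module) and noetherian. Viewing $M$ as a finite $T$-module annihilated by $J$, I choose a resolution $P_\bullet\to M$ by finite free $T$-modules. Because $T$ is $R$-flat, this yields
\[
\Tor^R_i(M,N) \cong \HH i{P_\bullet\otimes_RN}.
\]

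Now set $C:=T\otimes_RB$, which is again essentially of finite type over $R$, and hence noetherian. Writing $P_i\cong T^{n_i}$, each term $P_i\otimes_RN\cong(C\otimes_BN)^{n_i}$ is a finite $C$-module, and the differentials of $P_\bullet\otimes_RN$ are $C$-linear. It follows that each homology module $\Tor^R_i(M,N)$ is finite over $C$. Functoriality of $\Tor$ in the first variable forces $J$ to annihilate $\Tor^R_i(M,N)$, because $J$ annihilates $M$; so the $C$-module structure factors through $C/JC\cong A\otimes_RB$, which will complete the proof.

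The one point requiring genuine care, rather than a real obstacle, is the verification that the $A\otimes_RB$-module structure produced through the chosen presentation $A=T/J$ coincides with the intrinsic one coming from the $A$-action on $M$ and the $B$-action on $N$. Both structures arise from functoriality of $\Tor$ in the two variables, so the verification is a routine unwinding of definitions. The homological computation is elementary once the right presentation of $A$ is chosen; that choice is the one nontrivial idea in the argument.
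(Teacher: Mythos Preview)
Your argument is correct and follows essentially the same route as the paper's proof: present $A$ as $T/J$ with $T$ a localization of a polynomial ring over $R$, resolve $M$ by finite free $T$-modules, use $R$-flatness of $T$ to compute $\Tor^R_i(M,N)$ as the homology of a complex of finite $(T\otimes_RB)$-modules, and then descend to $A\otimes_RB$ via the annihilation by $J$. Your added remark on matching the resulting $A\otimes_RB$-action with the intrinsic one is a welcome clarification that the paper leaves implicit.
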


\begin{proof}
By hypothesis, one has $A\cong A'/I$, where $A'$ is a localization of a 
polynomial ring over $R$. As $M$ is a finite $A'$-module, it has a resolution 
$F$ by free $A'$-modules  of finite rank.  This also is a resolution 
by flat $R$-modules, so for each $i\in\BZ$ one has 
$\tor iRMN\cong\HH i{F\otimes_RB}$.  Since $F\otimes_RB$ is a complex 
of finite $(A'\otimes_{R}B)$-modules, and the ring $A'\otimes_{R}B$ is 
noetherian, $\HH i{F\otimes_RB}$ is a finite $(A'\otimes_{R}B)$-module.
It is  annihilated by $I\otimes_RB$, so it has a structure of 
$(A\otimes_{R}B)$-module, which is necessarily finite.
  \end{proof}

We use elementary facts concerning depth.  These are not well documented for not necessarily finite modules, so we collect the statements we need. As usual, local rings are assumed to be noetherian.

\begin{bfchunk}{Depth.}
\label{ch:depth}
Let $S$ be a local ring, $\fn$ its maximal ideal, and $L$ an $S$-module.
Following Auslander and Buchsbaum \cite{AB}, we define the \emph{depth}
of $L$ by
\[
\depth_{S}L= e - \sup\{i\mid \HH i{\bss\,; L}\ne 0\}\,,
\]
where $\bss=s_{1},\dots,s_{e}$ is a generating set for $\fn$.  In particular, 
if $\HH i{\bss\,; L}=0$ for all $i$, then $\depth L = \infty$.
The definition of the Koszul complex $S\langle\bss\rangle$
gives
  \[
\HH e{\bss\,; L} = (0:\fn)_L \cong \Hom_{S}(S/\fn,L)\,.
 \]
The depth of $L$ equals the infimum of those integers $i$ with $\Ext^i_S(S/\fn,L)\ne0$; see \cite[\S1, Th\'eor\`eme 1]{Bo}.  As usual, we set 
$\depth S=\depth_SS$.  
   \end{bfchunk}

In applications, we need to track depth along ring homomorphisms.

  \begin{lemma}
  \label{lem:depth-change}
Let $S$ be a local ring, $\fn$ its maximal ideal, $\sigma\col S\to S'$ a homomorphism of rings, and $L'$ an $S'$-module.
  \begin{enumerate}[\rm(1)]
 \item
When $S'$ is local with maximal ideal $\fn'$, and $\sigma(\fn)\subseteq\fn'$,
then $\depth_{S'}L'=0$ implies $\depth_{S}L'=0$; the converse holds 
when $\fn S'=\fn'$.
\item 
If $\tau \col S'\to S''$ is a flat homomorphism, $\depth_{S}L'\le\depth_{S}(L'\otimes_{S'}S'')$,
and equality holds in case $\tau$ is also faithful.
  \end{enumerate}
 \end{lemma}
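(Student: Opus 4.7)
The plan is to reduce both parts to the characterizations of depth collected in \ref{ch:depth}. With $\bss=s_{1},\dots,s_{e}$ a generating set of $\fn$, one has $\depth_{S}L=0$ precisely when $\HH e{\bss\,;L}=(0:\fn)_{L}$ is nonzero, that is, when $L$ contains a nonzero element annihilated by $\fn$. This identification lets me work with annihilators of individual elements rather than with the full Koszul complex for part (1), and the flatness hypothesis in part (2) is well suited to the Koszul-homology formulation itself.

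For part (1), I would argue both implications on this $(0:\fn)$-level. In the forward direction, if $0\neq x\in L'$ satisfies $\fn' x=0$, then the inclusion $\sigma(\fn)\subseteq\fn'$ ensures that $\fn x=0$ when $L'$ is viewed as an $S$-module via $\sigma$, and hence $\depth_{S}L'=0$. For the converse, the additional hypothesis $\fn S'=\fn'$ is exactly what is needed: any $0\neq x\in L'$ with $\sigma(\fn)x=0$ then satisfies $\fn' x=(\fn S')x=S'\cdot\sigma(\fn)x=0$, yielding $\depth_{S'}L'=0$.

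For part (2), the main step is that flat base change respects Koszul homology. Setting $K=S\langle\bss\rangle\otimes_{S}L'$, one has $K\otimes_{S'}S''=S\langle\bss\rangle\otimes_{S}(L'\otimes_{S'}S'')$, and because $\tau$ is flat the functor $-\otimes_{S'}S''$ commutes with homology; this gives a natural isomorphism
\[
\HH i{\bss\,;L'\otimes_{S'}S''}\cong\HH i{\bss\,;L'}\otimes_{S'}S''
\]
for every $i$. Vanishing of $\HH i{\bss\,;L'}$ therefore forces vanishing of the tensored module, so the supremum over $i$ with nonzero homology can only drop, and the inequality $\depth_{S}L'\leq\depth_{S}(L'\otimes_{S'}S'')$ follows from the formula in \ref{ch:depth}. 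When $\tau$ is faithfully flat the implication reverses and equality is produced.

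The main thing to watch---rather than a genuine obstacle---is that the depth formalism from \ref{ch:depth} must be applied to modules that are not assumed finitely generated, so the argument should never invoke Nakayama-type reductions or standard depth lemmas that tacitly use finiteness. With that caveat the proof is a bookkeeping exercise; the only substantive hypothesis used is $\fn S'=\fn'$ in the converse of (1), where it is exactly what allows annihilators to be transported through $\sigma$.
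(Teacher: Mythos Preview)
Your proof is correct and follows essentially the same approach as the paper: for (1) you argue with elements annihilated by $\fn$ and $\fn'$, which is exactly the element-level translation of the paper's inclusion $\Hom_{S'}(k',L')\subseteq\Hom_{S}(k,L')$ (with equality when $\fn S'=\fn'$), and for (2) your base-change isomorphism $\HH i{\bss\,;L'\otimes_{S'}S''}\cong\HH i{\bss\,;L'}\otimes_{S'}S''$ is precisely what the paper invokes. Your additional remarks about avoiding finiteness hypotheses are apt but do not depart from the paper's line of reasoning.
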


  \begin{proof}
(1) Set $k=S/\fn$ and $k'=S'/\fn'$.  In the string of $S$-linear maps
\[
\Hom_{S}(k,L')
\cong\Hom_{S'}(S'\otimes_{S}k,L')
\cong \Hom_{S'}(S'/\fn S',L')
\supseteq\Hom_{S'}(k',L')
\]
the isomorphisms are standard and the inclusion is induced by the surjection $S'/\fn S'\to S'/{\fn'}$;
it is an equality when $\fn S'=\fn'$. Now refer to \ref{ch:depth}.

(2) This follows from isomorphisms $\HH i{\bss\,; L'\otimes_{S'}S''}\cong\HH i{\bss\,; L'}\otimes_{S'}S''$.
  \end{proof}

When $M$ is an $R$-module $\Ass_RM$ denotes the set of prime ideals 
$\fp$ of $R$ for which there is a monomorphism $R/\fp\to M$, and $\Ass R$ stands
for $\Ass_RR$.  When $R$ is noetherian
$\Ass R$ is finite and contains every minimal prime of $R$.

Depth detects torsion through the following well-known observation:

\begin{lemma}
  \label{lem:localization}
Let $R$ be a noetherian ring and $M$ an $R$-module.  

The following condition implies that $M$ is torsion-free:
  \begin{equation}
 \label{eq:star}
   \tag{2.6.1}
\depth_{R_\fp}M_\fp>0 \quad\text{for every}\quad
\fp\in\Spec R\smallsetminus\Ass R
  \end{equation}
The converse assertion holds if every associated prime ideal of $R$ is minimal.
  \end{lemma}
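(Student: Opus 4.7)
The plan is to recast condition \eqref{eq:star} as a single inclusion of sets of associated primes, and then verify each direction of the lemma by a short prime-avoidance argument. For any $\fp \in \Spec R$, the $\Ext$-characterization of depth recorded in \ref{ch:depth} gives $\depth_{R_\fp} M_\fp = 0$ if and only if $\Hom_{R_\fp}(k(\fp), M_\fp) \neq 0$; since $\fp R_\fp$ is the maximal ideal of $R_\fp$, this amounts to $\fp R_\fp \in \Ass_{R_\fp} M_\fp$, equivalently (by the standard behaviour of associated primes under localization over a noetherian ring) to $\fp \in \Ass_R M$.  Thus \eqref{eq:star} is equivalent to the single inclusion $\Ass_R M \subseteq \Ass R$, and it suffices to establish the lemma in that form.

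For the forward implication I would argue by contrapositive.  Suppose $\zdr R M \neq 0$ and pick a nonzero $y \in \zdr R M$, killed by a non-zero-divisor $r \in R$. Since $R$ is noetherian and the cyclic module $Ry$ is nonzero, $\Ass_R Ry$ is nonempty; any $\fp$ in it lies in $\Ass_R M$ and contains $\Ann_R(y) \ni r$, so $\fp$ contains a non-zero-divisor. Each $\fq \in \Ass R$ consists entirely of zero-divisors of $R$, so $\fp \notin \Ass R$, exhibiting an element of $\Ass_R M \setminus \Ass R$ and contradicting the reformulated hypothesis.

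For the converse assume every associated prime of $R$ is minimal, so that $\Ass R$ is the finite set of minimal primes of $R$, and that $M$ is torsion-free. Any $\fp_0 \in \Ass_R M$ equals $\Ann_R(y)$ for some nonzero $y \in M$; torsion-freeness of $M$ prevents $\fp_0$ from containing a non-zero-divisor, so $\fp_0$ lies in the zero-divisor locus $\bigcup_{\fq \in \Ass R} \fq$. Prime avoidance, applied to this finite union, yields $\fp_0 \subseteq \fq$ for some $\fq \in \Ass R$, and minimality of $\fq$ forces $\fp_0 = \fq \in \Ass R$. Hence $\Ass_R M \subseteq \Ass R$, which is \eqref{eq:star}. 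The only mildly subtle step is the identification in the first paragraph of depth-zero localizations with membership in $\Ass_R M$ for modules $M$ not assumed finite over $R$; this rests only on the finite generation of $\fp$, which is guaranteed by the noetherian hypothesis on $R$.
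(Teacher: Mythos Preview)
Your proof is correct and follows essentially the same route as the paper: both directions hinge on picking an associated prime of a cyclic submodule for the forward implication and on prime avoidance for the converse. The only difference is organizational---you first translate \eqref{eq:star} into the inclusion $\Ass_R M\subseteq\Ass R$ via the standard localization behaviour of associated primes, whereas the paper carries out the equivalent depth computation $(0:\fp R_\fp)_{M_\fp}=((0:\fp)_M)_\fp$ inline; both rely on the finite generation of $\fp$ in exactly the way you flag.
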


  \begin{proof}
Recall: the set $U$ of non-zero-divisors of $R$ is equal to
$R\smallsetminus\bigcup_{\fq\in\Ass R}\fq$.

Assume that \eqref{eq:star} holds, but $um=0$ with $u\in U$ and 
$m\in M\smallsetminus\{0\}$.  Choose $\fp\in\Ass_R(Rm)$ and a
monomorphism $R/\fp\to Rm$.  Composed with the inclusion 
$Rm \subseteq M$, it induces an injection $R_\fp/\fp R_\fp\to M_\fp$, 
which gives $\depth_{R_\fp}M_\fp=0$.  Now \eqref{eq:star} implies 
$\fp\in\Ass R$, which is impossible, since $\fp$ contains the 
non-zero-divisor $u$.

When primes ideals in $\Ass R$ are minimal, prime avoidance shows that
each $\fp\notin\Ass R$ satisfies $\fp\cap U\neq\varnothing$.  If $M$ is torsion-free, 
then for $u\in\fp\cap U$ one has $(0:\fp R_\fp)_{M_\fp}=((0:\fp)_M)_\fp
\subseteq((0:u)_M)_\fp=0$, so $\depth_{R_\fp}M_\fp>0$, by \ref{ch:depth}.
  \end{proof}

Regular rings are products of integral domains, see 
\cite[14.3 and Exercise~9.11]{Ma}, so their associated primes are minimal.
Thus, the preceding lemma specializes to the following statement:

\begin{lemma}
  \label{lem:localization2}
When $R$ is a regular ring, an $R$-module $M$ is torsion-free if and 
only if $M_\fp$ is torsion-free over $R_\fp$ for each $\fp\in\Spec R$.
 \qed
  \end{lemma}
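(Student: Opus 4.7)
The strategy is to deduce both implications of the equivalence from Lemma \ref{lem:localization}, applied once to $R$ and once to each localization $R_\fp$. The key observation is that, as recalled just before Lemma \ref{lem:localization2}, a regular ring has $\Ass R$ equal to the set of its minimal primes; furthermore, each $R_\fp$ is a regular local ring, hence a domain, so $\Ass R_\fp=\{(0)\}$ consists of the single minimal prime of $R_\fp$. Both rings thus satisfy the hypothesis under which the converse part of Lemma \ref{lem:localization} applies.

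For the forward direction, I would assume $M$ is torsion-free over $R$. The converse part of Lemma \ref{lem:localization} yields $\depth_{R_\fq}M_\fq>0$ for every $\fq\in\Spec R\smallsetminus\Ass R$. Now fix $\fp$ and note that the primes $\fq'\in\Spec R_\fp$ correspond bijectively to primes $\fq\subseteq\fp$ of $R$, with $(R_\fp)_{\fq'}=R_\fq$ and $(M_\fp)_{\fq'}=M_\fq$. The associated prime $(0)$ of $R_\fp$ corresponds to the unique minimal prime of $R$ contained in $\fp$, so $\fq'\notin\Ass R_\fp$ translates to $\fq\notin\Ass R$. Consequently $\depth_{(R_\fp)_{\fq'}}(M_\fp)_{\fq'}>0$ for each such $\fq'$, and the first part of Lemma \ref{lem:localization} delivers that $M_\fp$ is torsion-free over $R_\fp$.

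For the converse, I would assume $M_\fp$ is torsion-free over $R_\fp$ for every $\fp$ and then verify the depth condition of Lemma \ref{lem:localization} for $M$ over $R$. Fix $\fp\in\Spec R\smallsetminus\Ass R$; since $\fp$ is not the unique minimal prime of $R$ contained in itself, $\fp R_\fp$ is a nonzero, hence non-associated, prime of $R_\fp$. The converse part of Lemma \ref{lem:localization}, applied to $M_\fp$ over $R_\fp$, then yields $\depth_{R_\fp}M_\fp=\depth_{(R_\fp)_{\fp R_\fp}}(M_\fp)_{\fp R_\fp}>0$. Hence $M$ is torsion-free by the first part of Lemma \ref{lem:localization}.

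The argument is almost entirely bookkeeping, with the substance already packaged into Lemma \ref{lem:localization}; the only point requiring a bit of care is the correspondence between $\Spec R_\fp$ and primes of $R$ contained in $\fp$ and how it identifies the associated primes on each side, which rests on the fact that a regular ring is a product of integral domains.
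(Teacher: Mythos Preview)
Your proof is correct and follows the same approach as the paper: the paper derives the lemma as an immediate specialization of Lemma~\ref{lem:localization}, using that regular rings (being products of domains) have all associated primes minimal. The paper gives no further detail beyond the sentence preceding the statement and the \textsf{qed} box, so your argument is simply a careful unpacking of that specialization, and it is done correctly.
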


In the proof of Theorem \ref{thm:torsion}, we  use the
following result, due to Huneke and Wiegand~\cite[6.3]{HW}. An
extension to all modules $\HH i{\bsx; L}$ is given by Takahashi
et.~al~\cite[Thm.\,2]{TTKH}; it sharpens the classical rigidity theorem
recalled in \ref{koszul:rigidity}.

\begin{bfchunk}{Koszul Rigidity.~II.}
\label{ch:koszul2}
Let $(S,\fn)$ be a local ring, $\bsx$ a finite sequence of elements
of $\fn$, and $L$ a  finite $S$-module.

If $0< \length_{S}\HH 1{\bsx; L}<\infty$ holds, then
$\depth_{S}\HH 0{\bsx; L}=0$.
 \end{bfchunk}

\stepcounter{theorem}

\begin{proof}[Proof of Theorem~\emph{\ref{thm:torsion}}] 
By hypothesis, $R$ is essentially smooth over a field, say $K$. 
Let $A$ and $B$ be witnesses for $M$ and $N$, respectively, and set
\[
Q={R\otimes_KR} \quad\text{and}\quad C=A\otimes_{K}B\,.
\]
The $Q$-algebra $C$ is essentially of finite type, and $\tor iRMN$ is a finite 
$C$-module for each $i$; see Lemma~\ref{lem:eft-tors}.  Recall the hypothesis: 
$\zdr R(M\otimes_RN)=0$.

\medskip

(1) In view of Theorem~\ref{thm:rigidity}, it suffices to prove that $\tor 1RMN$ is zero.

Suppose it is not. Pick a prime ideal $\fm$ in $C$ such that $\length_{C}{\tor 1RMN}_{\fm}$ is non-zero and finite. To get a contradiction, it suffices to show that for the ideal $\fq=\fm\cap Q$ one has
   \begin{equation}
  \label{eq:sufficient}
     \tag{2.9.1}
\depth_{Q_{\fq}}(M\otimes_{R}N)_{\fq}=0\,.
  \end{equation}

Indeed, set $\fp=\fq R$.  As $Q$ acts on $(M\otimes_{R}N)_\fq$ via the map $Q_\fq\to {R_\fp}$, Lemma~\ref{lem:depth-change}(1) and formula \eqref{eq:sufficient} give $\depth_{R_{\fq}}(M\otimes_{R}N)_{\fq}=0$.
Note that $\tor 1{R_\fp}{M_{\fq}}{N_{\fq}}$ is non-zero, as it is isomorphic to 
the module $\tor 1RMN_{\fq}$, which localizes to $\tor 1RMN_\fm\ne0$.  
In particular, ${R_\fp}$ is not a field.  Since $R$ is reduced,  this implies $
\fp\notin\Ass R$, so Lemma \ref{lem:localization2} gives the desired contradiction.

It remains to establish \eqref{eq:sufficient}.  Let $\bsx$ be a minimal generating set for 
the kernel of the homomorphism $Q_{\fq}\to {R_\fp}$.  Lemma \ref{lem:smooth}
and localization give
  \begin{equation}
    \label{eq:torsion}
     \tag{2.9.2}
  \tor jRMN_{\fm} 
  \cong \HH j{\bsx; (M\otimes_KN)_{\fm}}
  \end{equation}
as ${R_\fp}$-modules.  It follows that $\HH 1{\bsx; (M\otimes_KN)_{\fm}}$ has 
non-zero finite length over $C_\fm$.  In view of \ref{ch:koszul2}, this 
gives the second equality in the string
  \[
\depth_{C_{\fm}}(M\otimes_{R}N)_{\fm}
= \depth_{C_{\fm}}\HH 0{\bsx;(M\otimes_KN)_{\fm}}=0\,.
  \]
The first one comes from \eqref{eq:torsion} with $j=0$.  Lemma 
\ref{lem:depth-change}(1), applied to the local homomorphism 
$Q_{\fq}\to C_{\fm}$, gives $\depth_{Q_{\fq}}(M\otimes_{R}N)_{\fm}=0$. 
Now \eqref{eq:sufficient} results from Lemma \ref{lem:depth-change}(2) and the 
isomorphism $(M\otimes_{R}N)_{\fm}\cong(M\otimes_{R}N)_{\fq}\otimes_{C_\fq}C_\fm$.

\medskip

(2) By symmetry, it suffices to prove $\tor iR{\zdr RM}N=0$ for each $i$. 

The case $i=0$ is settled by Lemma~\ref{lem:zdr}, in view of (1). 

Observe that $\zdr RM$ is an $A$-submodule of $M$, so the exact sequence 
\eqref{eq:zdr} is one of $A$-modules.   In particular, $\tf R M$ is a finite $A$-module, 
and hence is essentially of finite type over $R$.  Our hypothesis and Lemma~\ref{lem:zdr} 
imply that $(\tf R{M})\otimes_{R}N$ is torsion-free over $R$, so the already 
established part (1) yields 
\[
\tor iRMN = 0 = \tor iR{\tf R{M}}N\quad\text{for all $i\geq 1$.}
\]
It now follows from \eqref{eq:zdr} that $\tor iR{\zdr R{M}}N=0$ holds for $i\geq 1$.
\end{proof}

We show that when $M$ and $N$ are modules essentially of finite type over $R$,
their torsion-freeness is not related to that of $M\otimes_{R}N$, in general.
 
\begin{example}
\label{ex:tensor1}
Let $R$ be a domain and let $\fp\ne0$ and $\fq$ be prime ideals of $R$,
with $\fp\nsubseteq\fq$.  The $R$-module $M=R/\fp$ is finite and torsion, and 
the $R$-module $N=k(\fq)$ is essentially of finite type, as witnessed by $R_{\fq}$.

One has $M\otimes_{R}N=0$, so $M\otimes_{R}N$ is torsion-free.  On the other 
hand, the module $N$ is torsion when $\fq\ne0$, and is torsion-free when $\fq=0$.
  \end{example}

 \section{Torsion in tensor powers}
   \label{sec:torsion2}

In Example \ref{ex:tensor1}, we noted that a tensor \emph{product} 
of modules essentially of finite type may be torsion-free, while its 
factors need  not have this property.  Here we prove that such a 
situation does not occur for tensor \emph{powers}:

\begin{theorem}
\label{thm:tf-descent}
Let $R$ be an essentially smooth algebra over a field, $M$ an 
$R$-module essentially of finite type, and $d$ a positive integer.

If the $R$-module $\tensor dM$ is torsion-free, then so is 
$\tensor nM$ for $1\leq n\leq d$.
 \end{theorem}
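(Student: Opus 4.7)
The plan is to induct on $d$; the case $d=1$ is vacuous, so the real work is to show that if $\tensor dM$ is torsion-free for some $d\ge 2$, then $\tensor{d-1}M$ is torsion-free as well.

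First I would set $N=\tensor{d-1}M$ and $T=\zdr RN$, so that the task becomes $T=0$. For any witness $A$ of $M$, the $R$-algebra $A'=\tensor{d-1}A$ is essentially of finite type (hence Noetherian) and $N$ is a finite $A'$-module, so the submodule $T\subseteq N$ is likewise finite over $A'$; in particular $T$ is essentially of finite type over $R$. Since $N\otimes_RM=\tensor dM$ is torsion-free, applying Theorem~\ref{thm:torsion}(2) to the pair $(N,M)$ gives $\tor iRTM=0$ for all $i\ge 0$, and in particular $T\otimes_RM=0$. Associativity of tensor product then yields $T\otimes_RN=0$, so $T\otimes_RN$ is trivially torsion-free, and a second application of Theorem~\ref{thm:torsion}(2) to $(T,N)$ gives
\[
\tor iRTT=\tor iRT{\zdr RN}=0 \quad\text{for every } i\ge 0.
\]

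The remaining---and, I expect, the hardest---step is to conclude $T=0$ from $T\otimes_RT=0$. The plan is to transfer the vanishing to the Noetherian witness $A'$: the canonical surjection $T\otimes_RT\twoheadrightarrow T\otimes_{A'}T$ (the base change of $T\otimes_RT$ along the multiplication $A'\otimes_RA'\to A'$) forces $T\otimes_{A'}T=0$. If $T$ were nonzero, pick $\fm\in\Supp_{A'}(T)$; then $T_\fm$ is a nonzero finitely generated $A'_\fm$-module with $T_\fm\otimes_{A'_\fm}T_\fm=0$. Reducing modulo $\fm A'_\fm$ gives $V\otimes_{k(\fm)}V=0$ for $V=T_\fm/\fm T_\fm$, which forces $V=0$ over the field $k(\fm)$, hence $T_\fm=0$ by Nakayama---a contradiction. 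Therefore $T=0$, completing the induction.

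The main obstacle is precisely this last passage: the vanishing $T\otimes_RT=0$ is too weak on its own because $T$ need not be finitely generated over $R$, so a direct Nakayama argument over $R$ is not available. The argument succeeds only because the hypothesis \emph{essentially of finite type} supplies a Noetherian witness $A'$ over which the transferred vanishing $T\otimes_{A'}T=0$ can be converted into $T=0$ by Nakayama's lemma.
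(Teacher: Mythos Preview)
Your proof is correct and follows essentially the same strategy as the paper's: invoke Theorem~\ref{thm:torsion}(2) to get $T\otimes_RM=0$, hence $T\otimes_RN=0$, then pass to the common witness $A'$ and use a Nakayama/support argument (the paper packages this as Lemma~\ref{lem:eft-tensor0}). Your second application of Theorem~\ref{thm:torsion}(2), reducing further to $T\otimes_RT=0$, is unnecessary: the same Nakayama argument already applies to $T\otimes_RN=0$ directly, since $T\subseteq N$ forces $\Supp_{A'}T\subseteq\Supp_{A'}N$, so any $\fm\in\Supp_{A'}T$ already lies in $\Supp_{A'}N$.
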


  \begin{Notes}
This step is barely visible in the proof of Auslander's theorem:  When $R$ is a regular local ring
and $M$ is a finite $R$-module with $\tensor dM$ torsion-free, \cite[3.1(b)]{Au} 
and \cite[Cor.\,2(b)]{Li} give ${\zdr R{(\tensor nM)}}\otimes_R{\tensor{d-n}M}=0$ 
for $1\leq n\leq d$, whence ${\zdr R{(\tensor nM)}}=0$, by Nakayama's Lemma.
  \end{Notes}
  
The hypothesis on $M$ cannot be weakened too much; see 
Remark \ref{ex:noneft}.

The \emph{support} of a module $M$ over a commutative ring 
$A$ is it the set
\[
\Supp_{A}M = \{\fm\in\Spec A\mid M_{\fm}\ne 0\}\,.
\]
Note that one has $\Supp_{A}M=\varnothing$ if and only if $M=0$.

\begin{lemma}
\label{lem:eft-tensor0}
Let $R$ be a noetherian ring, $M$ and $N$ be $R$-modules essentially of 
finite type, and $A$ an $R$-algebra that is a witness for $M$ and $N$.

If $\Supp_{A}M\cap \Supp_{A}N\neq\varnothing$ holds, then $M\otimes_{R}N\ne 0$.
  \end{lemma}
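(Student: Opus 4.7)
The plan is to reduce the assertion to an analogous statement over the witness algebra $A$. Because the $R$-module structures on $M$ and $N$ are obtained by restriction of scalars through the structure map $R\to A$, the canonical $R$-linear map $M\otimes_{R}N\to M\otimes_{A}N$ is surjective; hence it suffices to show that $M\otimes_{A}N\ne 0$.

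To establish the latter I would argue locally. Fix some $\fm\in\Supp_{A}M\cap\Supp_{A}N$, provided by the hypothesis, and pass to the noetherian local ring $A_{\fm}$. Both $M_{\fm}$ and $N_{\fm}$ are finite and non-zero over $A_{\fm}$, so Nakayama's Lemma yields non-zero $k(\fm)$-vector spaces $M_{\fm}/\fm M_{\fm}$ and $N_{\fm}/\fm N_{\fm}$. Their tensor product over $k(\fm)$ is then non-zero, and it is a quotient of $M_{\fm}\otimes_{A_{\fm}}N_{\fm}\cong (M\otimes_{A}N)_{\fm}$. Consequently $(M\otimes_{A}N)_{\fm}\ne 0$, which forces $M\otimes_{A}N\ne 0$, and the surjection of the first paragraph gives $M\otimes_{R}N\ne 0$.

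No step presents a serious obstacle; the argument is a two-step application of routine commutative algebra. The only point worth being careful about is the surjectivity of $M\otimes_{R}N\to M\otimes_{A}N$, which rests on the fact that the $R$-bilinear pairing $M\times N\to M\otimes_{A}N$ factors through $M\otimes_{R}N$ precisely because both $R$-actions extend to the common $A$-action, so that this map is well defined and obviously surjective.
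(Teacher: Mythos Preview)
Your proof is correct and follows the same route as the paper: reduce to $M\otimes_{A}N\ne0$ via the surjection $M\otimes_{R}N\to M\otimes_{A}N$, then use the support hypothesis. The only difference is cosmetic---the paper invokes the standard identity $\Supp_{A}(M\otimes_{A}N)=\Supp_{A}M\cap\Supp_{A}N$ for finite modules, whereas you unpack the relevant inclusion via Nakayama's Lemma, which is exactly how that identity is proved.
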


\begin{proof}
Since $\Supp_{A}(M\otimes_{A}N) = \Supp_{A} M \cap \Supp_{A}N$,
one has $M\otimes_{A}N\ne0$, and hence $M\otimes_{R}N\ne0$, due to the 
surjection $M\otimes_{R}N\to M\otimes_{A}N$.
  \end{proof}

In the preceding lemma, the hypothesis that a \emph{common} witness 
exists is necessary, because a tensor product of modules essentially 
of finite type may be zero otherwise, even when the ring $R$ is local; 
see Example \ref{ex:tensor1}.

Recall that $\zdr RN$ denotes the $R$-torsion submodule of $N$.

\begin{proof}[Proof of Theorem~\emph{\ref{thm:tf-descent}}]
It suffices to prove that if $\zdr R{(\tensor {n+1}M)}=0$ holds, then  
$\zdr R{(\tensor {n}M)}=0$ holds as well.

Theorem~\ref{thm:torsion}(2), the isomorphism $\tensor nM\otimes_{R}M\cong\tensor {n+1}M$ 
and the hypothesis $\zdr R{(\tensor {n+1}M)}=0$ imply $\zdr R{(\tensor{n}M)}\otimes_{R} M=0$.
Thus, we obtain 
  \begin{align*}
\zdr R{(\tensor nM)} \otimes_{R} \tensor {n}M 
    &\cong \zdr R{(\tensor nM)} \otimes_{R}  \big(M\otimes_{R} \tensor {n-1}M\big)  \\
    & \cong \big(\zdr R{(\tensor{n}M)} \otimes_{R} M\big) \otimes_{R}\tensor {n-1}M\\
    & =0\,.
  \end{align*}

Choose a witness $A$ for $M$ and set $B=\tensor {n}A$. The $R$-algebra $B$ is a 
witness for $\tensor{n}M$, and hence also for $\zdr R{(\tensor nM)}$.  In view of 
Lemma~\ref{lem:eft-tensor0}, the equality above implies the second one of the following equalities 
  \[
\Supp_{B}(\zdr R{(\tensor nM)}) = \Supp_{B}(\zdr R{(\tensor nM))}\cap\Supp_{B}(\tensor nM)
=\varnothing\,.
  \]
The first one holds as $\zdr R{(\tensor nM)}$ is a submodule of $\tensor nM$.
So $\zdr R{(\tensor nM)}=0$.
  \end{proof}

\begin{remark}
\label{ex:noneft}
Let $M$ be an $R$-module.  Recall that $M$ is said to be \emph{divisible} if the 
homothety $m\mapsto um$ is surjective for every non-zero-divisor $u\in R$.  

If $M$ is non-zero, torsion and divisible, then $M\otimes_{R}M=0$. Such an $M$ is not essentially of finite type; the conclusion of Theorem \ref{thm:tf-descent} fails for it.

As an example, let $K$ be a field, set $R=K[x]$, and take $M=K(x)/K[x]$.
  \end{remark}

\section{Flatness}
  \label{sec:flatness}

Here we prove the Main Theorem, announced in the introduction:

\begin{theorem}
\label{thm:Mflat}
Let $R$ be an essentially smooth algebra over a field and let $M$ be
an $R$-module essentially of finite type.

If $\tensor dM$ is torsion-free for some integer $d\geq \dim R$, then 
$M$ is flat. 
\end{theorem}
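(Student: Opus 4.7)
I induct on $e=\dim R$. The base case $e=0$ is immediate, as $R$ is then a finite product of fields and every $R$-module is flat.

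For the inductive step, since flatness is local and torsion-freeness over the regular ring $R$ is local by Lemma~\ref{lem:localization2}, I may assume $R$ is local of dimension $e\ge 1$ with maximal ideal $\fm$. For each non-maximal prime $\fp$, the ring $R_\fp$ is essentially smooth over $K$ of dimension $<e$, the module $\tensor[R_\fp]{d}{M_\fp}$ is torsion-free (as the localization of a torsion-free module), and $d\ge\dim R_\fp$; the inductive hypothesis then gives $M_\fp$ flat over $R_\fp$, so $M$ is flat on the punctured spectrum of $R$. In parallel, Theorem~\ref{thm:tf-descent} ensures $\tensor nM$ is torsion-free for $1\le n\le d$, and applying Theorem~\ref{thm:torsion}(1) to the factorization $\tensor{n+1}M\cong\tensor nM\otimes_R M$ yields $\tor iR{\tensor nM}M=0$ for all $i\ge 1$ and $1\le n\le d-1$.

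The plan for the final step is a depth estimate. Since $R$ is regular, $\fd_R M\le e<\infty$, so the Tor-vanishings just established propagate via the Auslander depth formula (in its extension to modules of finite flat dimension, due to Foxby and Iyengar) to give $\depth_R\tensor nM = n\depth_R M - (n-1)e$ for $1\le n\le d$. Torsion-freeness of $\tensor dM$, together with $\Ass R=\{(0)\}$ and Lemma~\ref{lem:localization}, forces $\depth_R\tensor dM\ge 1$, whence
\[
d\depth_R M - (d-1)e \ge 1, \quad\text{so}\quad \depth_R M \ge e - \frac{e-1}{d} > e-1,
\]
giving $\depth_R M = e$. Combined with $\depth R_\fp - \depth_{R_\fp} M_\fp \le 0$ at every non-maximal $\fp$ (from the flatness of $M$ on the punctured spectrum), Chouinard's formula $\fd_R M = \sup_\fp(\depth R_\fp - \depth_{R_\fp}M_\fp)$ yields $\fd_R M = 0$, i.e., $M$ is flat.

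\textbf{Main obstacle.} The chief obstacle is verifying the depth formula and Chouinard's formula outside their classical finitely-generated setting: both must be invoked for modules that are only essentially of finite type, which requires the derived-category (or Koszul-homological) extensions of Foxby and Iyengar. The degenerate situation $M\otimes_R k=0$ (where $\depth_R M=+\infty$ by convention) needs a brief separate check, but the depth inequality above still holds trivially and Chouinard's formula still yields flatness.
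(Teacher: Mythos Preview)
Your proposal is correct and follows essentially the same route as the paper: reduce to $R$ local, invoke Theorems~\ref{thm:tf-descent} and~\ref{thm:torsion} to obtain Tor-independence of the tensor powers, use additivity of $\dim R - \depth_R(-)$ on Tor-independent factors (your Foxby--Iyengar depth formula is precisely the paper's codepth additivity, Lemma~\ref{lem:codepth}, which it proves directly via K\"unneth), and conclude by Chouinard. The only difference is organizational: the paper runs the same depth/codepth computation directly at each prime $\fp$ after localizing, so your induction on $\dim R$ and separate treatment of the punctured spectrum are not needed.
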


  \begin{Notes}
When $R$ is a regular local ring and $M$ is a finite $R$-module, the 
conclusion of Theorem \ref{thm:Mflat} is established by Auslander \cite[3.2]{Au} when $R$ 
is unramified and by Lichtenbaum \cite[Cor.\,3]{Li} in general.  It 
is deduced from the analogs of Theorems \ref{thm:torsion} and
\ref{thm:tf-descent}, by using the additivity of projective dimensions
on $\operatorname{Tor}$-independent modules.

We employ a similar technique to deduce Theorem \ref{thm:Mflat} from 
Theorems \ref{thm:torsion} and~\ref{thm:tf-descent}.  However, projective 
dimensions are not always additive for non-finite modules, so we
replace them with an invariant that has the desired property.
  \end{Notes}

When $(S,\fn,l)$ is a local ring, for each $S$-module $M$ one sets
  \[
\cd SM = \sup\{i\in\BZ\mid\tor iS{l}M\ne 0\}\,.
  \]
In particular, when $\tor iSlM=0$ for all $i$, one has $\cd SM= -\infty$.

The name \emph{codepth} is motivated by the description of depth 
in terms of $\operatorname{Ext}$, and by the well-known result below, 
whose proof is included for completeness.

\begin{lemma}
\label{lem:codepth}
When $(S,\fn,l)$ is a regular local ring and $M$ an $S$-module, 
   \begin{equation}
    \label{eq:codepth2}
     \tag{4.2.1}
\cd SM = \dim S-\depth_SM\,.
  \end{equation}

If $N$ is an $S$-module with $\tor iSMN=0$ for $i\ge1$, then 
  \begin{equation}
    \label{eq:codepth3}
     \tag{4.2.2}
\cd S{(M\otimes_{S}N)}=\cd SM + \cd SN\,.
  \end{equation}
\end{lemma}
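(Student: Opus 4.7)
The first identity \eqref{eq:codepth2} falls out of the Koszul description of $\Tor$. Since $S$ is regular local of dimension $e=\dim S$, any minimal generating set $\bss$ of $\fn$ forms a regular sequence of length $e$, so $S\langle\bss\rangle$ is a free resolution of $l$, whence $\tor iSlM\cong\HH i{\bss;M}$ for every $i$. Comparing with the depth formula in \ref{ch:depth} yields $\cd SM = e-\depth_SM$.

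For \eqref{eq:codepth3} I would work with the double complex $K\otimes_SM\otimes_SG$, where $K=S\langle\bss\rangle$ is the Koszul resolution of $l$ and $G\to N$ is a free resolution. Both induced spectral sequences abut to the homology of the total complex. Filtering by columns, the flatness of each $K_i$ together with the vanishing hypothesis $\tor jSMN=0$ for $j\ge 1$ collapses the first page to $K\otimes_S(M\otimes_SN)$ in row $0$ and zero elsewhere, so the total homology is $\tor *Sl{M\otimes_SN}$. Filtering by rows, the flatness of each $G_j$ yields first page $\tor iSlM\otimes_SG_j$, and a second pass of homology gives
\[
E^2_{i,j} = \tor jS{\tor iSlM}N \cong \tor iSlM\otimes_l\tor jSlN,
\]
the last isomorphism holding because $\tor iSlM$ is annihilated by $\fn$, hence a direct sum of copies of $l$, and $\Tor$ commutes with direct sums in the first argument.

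Writing $m=\cd SM$ and $n=\cd SN$, the $E^2$-page is supported in the rectangle $[0,m]\times[0,n]$ with $E^2_{m,n}\ne 0$. Every differential into or out of $(m,n)$ on any later page lands in, or originates from, a position outside this support rectangle, so $E^2_{m,n}$ survives to $E^\infty$, forcing $\tor{m+n}Sl{M\otimes_SN}\ne 0$. Terms on the antidiagonals $i+j>m+n$ vanish already at $E^2$, giving the matching upper bound, and \eqref{eq:codepth3} follows. The degenerate cases where $m$ or $n$ is $-\infty$ are handled automatically: the corresponding row or column of $E^2$ then vanishes identically, and both sides of \eqref{eq:codepth3} are $-\infty$ under the natural convention.

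The main obstacle is the spectral-sequence bookkeeping: verifying the column-filtration collapse (which uses only flatness of the $K_i$, not of $M$ or $N$), tracking the $E^2$ identification $\tor jSVN\cong V\otimes_l\tor jSlN$ for an $l$-vector space $V$, and checking that the corner term $E^2_{m,n}$ is genuinely a permanent cycle. Once these are in place, the additivity is immediate.
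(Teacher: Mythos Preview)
Your argument is correct. For \eqref{eq:codepth2} you do exactly what the paper does. For \eqref{eq:codepth3}, however, you take a genuinely different route. The paper chooses flat resolutions $F\to M$ and $G\to N$; the Tor-vanishing hypothesis makes $F\otimes_SG$ a flat resolution of $M\otimes_SN$, and then the chain of isomorphisms
\[
l\otimes_S(F\otimes_SG)\cong(l\otimes_SF)\otimes_l(l\otimes_SG)
\]
reduces everything to the K\"unneth formula over the field $l$, yielding the full graded isomorphism $\tor *Sl{M\otimes_SN}\cong\tor *SlM\otimes_l\tor *SlN$. You instead resolve $l$ by the Koszul complex and $N$ by a free resolution, then run the two spectral sequences of the double complex and extract the codepth equality from a corner-survival argument on the $E^2$-page. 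Your method is sound and the bookkeeping you outline is accurate, but it deploys heavier machinery to obtain a weaker conclusion: you establish only the equality of top degrees, whereas the paper's K\"unneth argument gives the entire graded Tor as a tensor product. The payoff of the paper's approach is that the passage to $l$-linear algebra makes the additivity transparent without any spectral-sequence analysis; your approach has the minor advantage of needing only one resolution of each of $l$ and $N$ rather than flat resolutions of both $M$ and $N$, but since flat resolutions always exist this is not a real gain.
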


\begin{proof}
Let $\bss$ be a minimal generating set for $\fn$. As $S$ is regular, the  Koszul 
complex on $\bss$  is a free resolution of $l$, and hence $\tor iSlL\cong \HH i{\bss\,; L}$. 
A comparison of the definitions of
depth (see~\ref{ch:depth}) and codepth validates \eqref{eq:codepth2}.

Let $F$ and $G$ be flat resolutions of $M$ and $N$, respectively. The hypothesis 
translates to the statement that the complex $F\otimes_{S}G$ of flat $S$-modules 
is a resolution of $M\otimes_{S}N$. This  gives rise to the first isomorphism below:
\begin{align*}
\tor *Sl{M\otimes_{S}N}
	&\cong \HH *{l\otimes_{S}(F\otimes_{S}G)} \\
	&\cong \HH *{(l\otimes_{S}F)\otimes_{l}(l\otimes_{S}G)}\\
	&\cong \HH *{l\otimes_{S}F} \otimes_{l} \HH *{l\otimes_{S}G}\\
	&\cong\tor *SlM\otimes_{l}\tor *SlN
\end{align*}
The second one is standard; the third one is the K\"unneth isomorphism.  
Now equate the highest degree in which a vector space on either side
is non-zero.
\end{proof}

When a module $M$ over a noetherian ring $R$ has a finite flat 
resolution, and $\fd_RM$ denotes the shortest length of such a 
resolution, one has
\[  
\fd_RM=\sup\{\cd {R_{\fp}}{M_\fp}\mid\fp\in \Spec R\}
\]
by Chouinard \cite[1.2]{Ch}.  We use only the following special case of this result.

\begin{lemma}
\label{lem:flattest}
Let $R$ be a regular ring and $M$ an $R$-module. 

If $\cd{R_{\fp}}{M_{\fp}}\leq 0$ holds for each $\fp\in\Spec R$, then $M$ is flat.
\end{lemma}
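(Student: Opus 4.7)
The plan is to prove the stronger statement that $\tor iR{R/\fp}M=0$ for every $\fp\in\Spec R$ and every $i\ge 1$; this will imply $\tor 1RNM=0$ for every finitely generated $R$-module $N$ by d\'evissage, and hence the flatness of $M$.  Since flatness is a local property, and the hypothesis $\cd{R_\fp}{M_\fp}\le 0$ passes to all further localizations, I would first reduce to the case where $R$ is a regular local ring with maximal ideal $\fm$ and residue field $k$; in particular $\dim R$ is finite.

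I would then argue by ascending induction on $\dim R/\fp$.  The base case $\fp=\fm$ is precisely the hypothesis $\cd RM\le 0$.  For the inductive step, fix $\fp\subsetneq\fm$ and pick $x\in\fm\smallsetminus\fp$.  Since $R/\fp$ is a domain, $x$ is a non-zero-divisor on it, giving
\[
0\lra R/\fp\xra{\,x\,} R/\fp\lra R/(\fp+xR)\lra 0\,.
\]
Every associated prime of $R/(\fp+xR)$ strictly contains $\fp$, so a prime filtration of this module has subquotients of the form $R/\fq$ with $\dim R/\fq<\dim R/\fp$.  Applying the inductive hypothesis to each such $\fq$ and stepping through the filtration with long exact Tor sequences yields $\tor iR{R/(\fp+xR)}M=0$ for every $i\ge 1$.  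The long exact sequence attached to the displayed short exact sequence then forces multiplication by $x$ to act as an isomorphism on $V:=\tor iR{R/\fp}M$ for every $i\ge 1$.

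The main obstacle is converting this isomorphism into vanishing.  The key observation is that $V$ is annihilated by $\fp$ (since $R/\fp$ is), and at the same time every element of $R\smallsetminus\fp$ acts invertibly on $V$: units of the local ring $R$ act invertibly on any module, while elements of $\fm\smallsetminus\fp$ act invertibly by the previous step.  Hence the $R$-action on $V$ factors through $k(\fp)$, so $V$ is a $k(\fp)$-vector space and the localization map $V\to V\otimes_R R_\fp$ is an isomorphism.  But $V\otimes_R R_\fp\cong\tor i{R_\fp}{k(\fp)}{M_\fp}$, which vanishes by the hypothesis $\cd{R_\fp}{M_\fp}\le 0$.  Thus $V=0$, closing the induction and completing the proof.
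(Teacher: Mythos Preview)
Your argument is correct, and it takes a genuinely different route from the paper.  The paper localizes to a regular local ring, invokes the fact that every module there has finite flat dimension, and then appeals to Chouinard's formula $\fd_RM=\sup_{\fp}\cd{R_\fp}{M_\fp}$ as a black box.  You instead prove the needed vanishing $\tor iR{R/\fp}M=0$ directly, by a d\'evissage induction on $\dim R/\fp$: the base case is the hypothesis at the maximal ideal, and the inductive step uses the $k(\fp)$-vector-space trick to transport the problem to the localization at~$\fp$, where the hypothesis again applies.  Your approach is more self-contained---it does not require knowing or citing Chouinard's theorem, and it never explicitly invokes finite flat dimension---at the cost of a slightly longer argument.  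One small remark on phrasing: you say ``every associated prime of $R/(\fp+xR)$ strictly contains $\fp$'', but what you actually use is that \emph{every} prime appearing in a prime filtration contains $\Ann_R\bigl(R/(\fp+xR)\bigr)=\fp+xR\supsetneq\fp$; this is of course true, and the conclusion $\dim R/\fq<\dim R/\fp$ follows.
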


\begin{proof}
Since $M$ is flat when $M_{\fp}$ is flat for each $\fp\in \Spec R$, we may 
assume that $R$ is a regular local ring.  Every $R$-module then has finite 
flat dimension, see \cite[19.2]{Ma}, so Chouinard's formula, recalled above,
gives the result.
\end{proof}

\begin{proof}[Proof of Theorem~\emph{\ref{thm:Mflat}}]
By Lemma \ref{lem:flattest}, it suffices to fix $\fp\in\Spec R$ and prove 
$\cd{R_{\fp}}{M_{\fp}}\leq 0$.  It follows from the definitions that $R_{\fp}$ is 
essentially smooth over a field and $M_{\fp}$ is essentially of finite type over 
$R_{\fp}$.  The $R_\fp$-module $\tensor[R_\fp]d{(M_\fp)}$ is isomorphic to 
$(\tensor dM)_{\fp}$, and hence it is torsion-free by Lemma \ref{lem:localization2}.  
Thus, we assume that $R$ is local with maximal ideal $\fp$, and we set out 
to prove that if $\tensor dM$ is torsion-free for some $d\geq \dim R$, then $\cd RM\leq 0$.  

If $\dim R=0$, then $R$ is a field, and the assertion is obvious.  

Assume $\dim R\ge1$.  Theorem~\ref{thm:tf-descent} shows that the 
$R$-module $\tensor nM$ is torsion-free for $1\leq n\leq d$. Thus
Theorem~\ref{thm:torsion}(1), applied with $N=\tensor {n-1}M$, yields
 \[
\tor iRM{\tensor {n-1}M}=0 \quad\text{for each $n$ with $2\leq n\leq d$ and each $i\geq 1$.}
 \]
Repeated application of formula \eqref{eq:codepth3} then gives the 
equality below:
 \[
d\cd RM = \cd R{(\tensor dM)} \leq \dim R - 1 < d\,.
 \]
The first inequality comes from \eqref{eq:codepth2} and Lemma 
\ref{lem:localization}, as $\tensor dM$ is torsion-free; the second 
one holds by hypothesis.  As a result, we get $\cd {R}M\leq 0$.
 \end{proof}

A reformulation of Theorem~\ref{thm:Mflat} gives the geometric criterion 
for flatness, stated in the introduction:

\begin{remark}
With $R$ and $M$ as in Theorem~\ref{thm:Mflat}, let $B$ be a witness for $\tensor dM$ and 
$\beta\col R\to B$ the structure homomorphism; for example, set 
$B=\tensor dA$, with $A$ a witness for $M$. For the induced map 
${}^{\mathsf{a}}\beta\col \Spec B\to\Spec R$, one has:

\smallskip

\emph{If\,\ ${}^{\mathsf{a}}\beta(\Ass_B(\tensor dM))$ is contained in 
$\Ass R$, then $M$ is flat over $R$.}

\smallskip

Indeed,  \cite[Ex.\,6.7]{Ma} gives ${}^{\mathsf{a}}\beta(\Ass_B(\tensor dM))
=\Ass_{R}(\tensor dM)$, so the hypothesis yields 
$\Ass_{R}(\tensor dM)\subseteq\Ass R$, hence $\tensor dM$ is torsion-free 
over $R$; see Lemma \ref{lem:localization}.
  \end{remark}

\section{Dimension two}
  \label{sec:two}

It is natural to ask whether the conclusion of Theorem 
\ref{thm:Mflat} holds under the weaker assumptions that $R$ is a
regular ring of finite Krull dimension and $M$ is module that is finite 
over a noetherian $R$-algebra. We give a positive answer when 
$d=\dim R\le2$, by extending the argument used by Vasconcelos 
to prove the case $M=A$; see \cite[6.1]{Va}.

\begin{proposition}
\label{prop:vas}
Let $R$ be a regular ring with $\dim R\leq 2$, let $A$ be a noetherian
$R$-algebra, and let $M$ be a finite $A$-module.

If the $R$-module $M\otimes_{R}M$ is torsion-free, then $M$ is flat
over $R$. 
 \end{proposition}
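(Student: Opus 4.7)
The plan is to localize at a prime of $R$ and reduce to the case where $R$ is a regular local ring of Krull dimension $d\le 2$: torsion-freeness of $M\otimes_R M$ is preserved by localization (Lemma~\ref{lem:localization2}), flatness of $M$ is local on $\Spec R$, and $M_\fp$ is finite over the noetherian ring $A\otimes_R R_\fp$. The case $d=0$ is trivial.

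Suppose $d=1$, so $R$ is a DVR with uniformizer $\pi$; flatness then equals torsion-freeness. Set $T=\zdr R M$ and $\bar M=M/T$. Since $\bar M$ is torsion-free over a DVR it is flat, so tensoring the sequence $0\to T\to M\to\bar M\to 0$ with $M$ remains exact and exhibits the torsion module $T\otimes_R M$ as a submodule of the torsion-free module $M\otimes_R M$; hence $T\otimes_R M=0$. A second tensoring with $T$—again using flatness of $\bar M$—yields $T\otimes_R T=0$, and reducing modulo $\pi$ gives $(T/\pi T)\otimes_k(T/\pi T)=0$, so $T=\pi T$. For each $\fq\in\Supp_A T$, either $\pi\in\fq$ and Nakayama applied to the finite $A_\fq$-module $T_\fq$ forces $T_\fq=0$, or $\pi$ is invertible in $A_\fq$ while every element of $T$ is $\pi$-power torsion, again giving $T_\fq=0$. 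Hence $T=0$ and $M$ is $R$-flat.

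For $d=2$, applying the dimension-one case to $R_\fp$ at every height-one prime shows that $M$ is flat at all non-maximal primes of $R$; by Lemma~\ref{lem:flattest} it suffices to prove $\cd R M\le 0$ at $\fm$. Since $M\otimes_R M$ is torsion-free over the regular domain $R$, Lemma~\ref{lem:localization} yields $\depth_R(M\otimes_R M)\ge 1$, so \eqref{eq:codepth2} gives $\cd R(M\otimes_R M)\le 1$. If one could establish that $\tor iRMM=0$ for all $i\ge 1$, then the multiplicativity formula \eqref{eq:codepth3} would produce $2\cd R M=\cd R(M\otimes_R M)\le 1$, forcing $\cd R M\le 0$.

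The main obstacle is precisely this $\operatorname{Tor}$-vanishing, since Theorem~\ref{thm:torsion} is unavailable: we have only that $R$ is regular, not essentially smooth over a field. The plan, extending Vasconcelos's argument, exploits two favorable features of dimension two: $R$ has global dimension $2$, so only $i=1,2$ matter; and by the previous paragraph each $\tor iRMM$ is already supported at $\{\fm\}$. Assume for contradiction that $\tor 1RMM\ne 0$, localize to a prime of $A\otimes_R A$ at which this module has nonzero finite length, and apply Koszul Rigidity~II (\ref{ch:koszul2}) with respect to a minimal system of parameters of $R$; this will force $\depth_R(M\otimes_R M)=0$ at the chosen prime, contradicting torsion-freeness. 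The restriction $d\le 2$ enters precisely through the length of the Koszul complex on a parameter system for $R$, which is $d$.
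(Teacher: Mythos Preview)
Your localization step and the $d\le1$ case are fine, and in the $d=2$ case your reduction to proving $\tor iRMM=0$ for $i=1,2$ (after which \eqref{eq:codepth3} and \eqref{eq:codepth2} finish) matches the paper. The gap is in how you propose to obtain that vanishing.

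Koszul Rigidity~II~(\ref{ch:koszul2}) needs a module $L$ and a sequence $\bsx$ with $\HH 0{\bsx;L}\cong (M\otimes_RM)_{\fm}$ and $\HH 1{\bsx;L}\cong \tor 1RMM_{\fm}$. In the proof of Theorem~\ref{thm:torsion} this is furnished by Lemma~\ref{lem:smooth}: one takes $L=(M\otimes_KN)_\fq$ and $\bsx$ a local generating set for $\Ker(R\otimes_KR\to R)$, and the identification rests on the Cartan--Eilenberg change-of-rings formula together with the fact that the diagonal ideal is locally generated by a regular sequence---both of which use the field $K$. A ``minimal system of parameters of $R$'' does not play this role: over $R$ the Koszul complex on such a system resolves $k$, so its homology on any $L$ computes $\tor iRkL$, not $\tor iRML$; and there is no substitute $Q$ over which $R$ is cut out by a regular sequence once the essentially-smooth-over-a-field hypothesis is dropped. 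Your closing sentence about ``the length of the Koszul complex'' also does not locate where $d\le2$ is actually needed.

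The paper's argument bypasses Koszul rigidity entirely. Since $\tf RM$ is torsion-free over the regular local domain $R$, the exact sequence
\[
0\lra \tf RM\lra U^{-1}(\tf RM)\lra C\lra 0
\]
with $U=R\smallsetminus\{0\}$ has flat middle term, so $\fd_R(\tf RM)\le\fd_RC-1\le\dim R-1\le1$. This immediately gives $\tor 2R{\tf RM}{\tf RM}=0$ and, via the same sequence, $\tor 1R{\tf RM}{\tf RM}\cong\tor 2R{\tf RM}C=0$. It remains to show $M=\tf RM$. Assuming $\zdr RM\ne0$ and localizing at a prime of $A$, one combines Lemma~\ref{lem:zdr} (using the just-proved $\tor 1R{\tf RM}{\tf RM}=0$) with the common-witness support Lemma~\ref{lem:eft-tensor0} to force $\tf RM=0$, hence $M$ torsion and $M\otimes_RM$ torsion, a contradiction. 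Thus $d\le2$ enters through the bound $\fd_R(\tf RM)\le1$, not through any Koszul complex on parameters of $R$.
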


\begin{proof}
It suffices to fix $\fp$ in $\Spec R$ and prove $\cd {R_{\fp}}{M_{\fp}}\leq 0$; see Lemma~\ref{lem:flattest}. Using
Lemma~\ref{lem:localization2}, one can reduce to the case where $R$
is local, with $\fp$ its maximal ideal, so the desired result is that
$\cd RM\leq 0$. We may  assume $\dim R\geq 1$.

It is enough to prove that $M$ is torsion-free and that there are equalities:
\[
\tor iRMM=0\quad\text{for}\quad i=1,2\,.
\]
Indeed, since $R$ is regular with $\dim R\leq 2$, they imply $\tor iRMM=0$ for each $i\geq 1$. One has $\cd R{(M\otimes_{R}M)} \leq 1$, by \eqref{eq:codepth2} and Lemma \ref{lem:localization}, so Lemma~\ref{lem:codepth}(2), applied with $N=M$, yields $\cd RM\leq 0$, as desired.

Recall that $\zdr RM$ is the $R$-torsion submodule of $M$ and $\tf RM=M/\zdr RM$.  As $\tf RM$ is torsion-free and
$R$ is a domain, there is an exact sequence
 \[
0\lra \tf RM\lra U^{-1}(\tf RM)\lra C\lra 0
 \]
of $R$-modules, where $U = R\smallsetminus\{0\}$.  As $U^{-1}(\tf RM)$ is flat over $R$,
and one has $\fd_RC\le2$, we obtain $\fd_R(\tf RM)\le1$.  This gives 
the equalities below:
\[
\tor 1R{\tf RM}{\tf RM}\cong\tor 2R{\tf RM}C=0=\tor 2R{\tf RM}{\tf RM} \,.
 \]
The isomorphism is obtained by tensoring the sequence above with $\tf RM$.
To finish, we prove that $M$ is torsion-free; that is to say, $\zdr RM=0$ holds. 

By way of contradiction, assume $\zdr RM\ne0$.  As $\zdr RM$
is an $A$-module, we have $(\zdr RM)_{\fm}\ne 0$ for some $\fm$ 
in $\Spec A$. There is a natural isomorphism of $A_{\fm}$-modules 
$(\zdr RM)_{\fm}\cong \zdr R(M_{\fm})$, and 
$M_{\fm}\otimes_{R}M_{\fm}$ is torsion-free over $R$, as it is a localization 
of $M\otimes_{R}M$.  Thus, we may also assume that $A$ is local.

Lemma~\ref{lem:zdr}, applied with $N=M$, shows that $M\otimes_{R}\tf RM$ 
is torsion-free. As $\tor 1R{\tf RM}{\tf RM}=0$ holds, the last assertion in 
Lemma~\ref{lem:zdr}, now applied with $N=\tf RM$, gives 
$\zdr R{M}\otimes_{R}\tf R{M}=0$.  Note that $\zdr RM$, being an submodule 
of the finite $A$-module $M$, is itself finite, so $A$ is a witness for both 
$\zdr RM$ and $\tf R{M}$.  As the maximal ideal of $A$ is contained in
the support of every non-zero finite $A$-module, and $\zdr RM$ is non-zero, 
Lemma \ref{lem:eft-tensor0} implies $\tf RM=0$.  Thus, $M$ is a torsion 
$R$-module, and then so is $M\otimes_{R}M$. 

This contradicts our hypothesis.
 \end{proof}

\section*{Acknowledgment}
This paper was triggered by discussions with Janusz Adamus.  We thank him, 
Ed Bierstone, and Pierre Milman for correspondence concerning \cite{ABM}.

\end{document}